
\documentclass[11pt,reqno]{amsart}
\usepackage{amsmath}
\usepackage{amsfonts}
\usepackage{amssymb,latexsym}
\usepackage{cite}
\usepackage[height=190mm,width=130mm]{geometry}

\setcounter{MaxMatrixCols}{10}

\theoremstyle{plain}
\newtheorem{theorem}{Theorem}
\newtheorem{lemma}{Lemma}
\newtheorem{corollary}{Corollary}

\theoremstyle{definition}

\theoremstyle{remark}

\numberwithin{equation}{section}
\input{tcilatex}

\begin{document}
\title[SOME BASIC PROPERTIES OF THE GENERALIZED BI-PERIODIC FIBONACCI AND
LUCAS SEQUENCES]{SOME BASIC PROPERTIES OF THE GENERALIZED BI-PERIODIC
FIBONACCI AND LUCAS SEQUENCES}
\author{Elif TAN}
\address{Department of Mathematics, Ankara University, Science Faculty,
06100 Tandogan Ankara, Turkey.}
\email{etan@ankara.edu.tr}
\author{Ho-Hon Leung}
\address{Department of Mathematical Sciences, UAEU, Al-Ain, United Arab
Emirates}
\email{hohon.leung@uaeu.ac.ae}
\subjclass[2000]{ 11B39, 05A15}
\keywords{Horadam sequence, bi-periodic Fibonacci sequence, matrix method}

\begin{abstract}
In this paper, we consider a generalization of Horadam sequence $\left\{
w_{n}\right\} $ which is defined by the recurrence relation $w_{n}=\chi
\left( n\right) w_{n-1}+cw_{n-2},$ where $\chi \left( n\right) =a$ if $n$ is
even, $\chi \left( n\right) =b$ if $n$ is odd with arbitrary initial
conditions $w_{0},w_{1}$ and nonzero real numbers $a,b$ and $c.$ As a
special case, by taking initial conditions $0,1$ and \ $2,b$ we define the
sequences $\left\{ u_{n}\right\} $ and $\left\{ v_{n}\right\} ,$
respectively. The main purpose of this study is to derive some basic
properties of the sequences $\left\{ u_{n}\right\} ,\left\{ v_{n}\right\} $
and $\left\{ w_{n}\right\} $ by using matrix approach.
\end{abstract}

\maketitle

\section{Introduction}

A generalization of Horadam sequence $\left\{ w_{n}\right\} $ is defined by
the recurrence relation%
\begin{equation}
w_{n}=\chi \left( n\right) w_{n-1}+cw_{n-2},\text{ }n\geq 2  \label{1}
\end{equation}%
where $\chi \left( n\right) =a$ if $n$ is even, $\chi \left( n\right) =b$ if 
$n$ is odd with arbitrary initial conditions $w_{0},w_{1}$ and nonzero real
numbers $a,b$ and $c.$ They emerged as a generalization of the best known
sequences in the literature, such as Horadam sequence, Fibonacci\&Lucas
sequence, $k$-Fibonacci\&$k$-Lucas sequence, Pell\&Pell-Lucas sequence,
Jacobsthal\&Jacobsthal-Lucas sequence, etc. Here we call the sequence $%
\left\{ w_{n}\right\} $ as \textit{generalized bi-periodic Horadam sequence}%
. In particular, by taking initial conditions $0,1$ and $2,b$ we call these
sequences as \textit{generalized bi-periodic Fibonacci sequence} $\{u_{n}\}$
and \textit{generalized bi-periodic Lucas sequence} $\{v_{n}\},$
respectively.

Some modified versions of the sequence $\left\{ w_{n}\right\} $ have been
studied by several authors. For the case of $w_{0}=0,w_{1}=1$ and $c=1,$ the
sequence $\left\{ w_{n}\right\} $ reduces to the bi-periodic Fibonacci
sequence, and some basic properties of this sequence can be found in \cite%
{Edson, Yayenie, Sahin}. Its companion sequence, bi-periodic Lucas sequence,
was studied in \cite{Irmak, Bilgici, Tan1, Tan5}. For the case of $c=1,$ the
sequence $\left\{ w_{n}\right\} $ reduces to the bi-periodic Horadam
sequence,\textit{\ }and several properties of this sequence were given in 
\cite{Edson, Tan3}. For a further generalization of the sequence $\left\{
w_{n}\right\} ,$ we refer to \cite{Irmak2, Panario}.

On the other hand, the matrix method is extremely useful for obtaining some
well-known Fibonacci properties, such as Cassini's identity, d'Ocagne's
identity, and convolution property, etc. For the detailed history of the
matrix technique see \cite{Kalman, Williams, Johnson, Waddill, siar,
Demirturk}. The $2\times 2$ matrix representation for the general case of
the sequence $\left\{ w_{n}\right\} $ was given firstly in \cite{Tan1}, and
several properties were obtained for the even indices terms of this
sequence. Then, in \cite{Tan2}, author defined a new matrix identity for the
bi-periodic Fibonacci sequence as follows:%
\begin{equation}
S:=\left( 
\begin{array}{cc}
ab & ab \\ 
1 & 0%
\end{array}%
\right) \Rightarrow S^{n}=\left( ab\right) ^{\left\lfloor \frac{n}{2}%
\right\rfloor }\left( 
\begin{array}{cc}
b^{\zeta \left( n\right) }q_{n+1} & a^{\zeta \left( n\right) }bq_{n} \\ 
a^{-\zeta \left( n+1\right) }q_{n} & b^{\zeta \left( n\right) }q_{n-1}%
\end{array}%
\right) ,  \label{2}
\end{equation}%
where $\left\{ q_{n}\right\} $ is the bi-periodic Fibonacci sequence and $%
\zeta \left( n\right) $ is the parity function. By using this matrix
identity, simple proofs of several identities of the bi-periodic Fibonacci
and Lucas numbers were given. One of the main objectives of this study is to
generalize the matrix identity (\ref{2}) for the sequence $\left\{
w_{n}\right\} $.

Similar to the notation of the classical Horadam sequence in \cite{Horadam},
we can state several number of sequences in terms of the generalized
bi-periodic Horadam sequence $\left\{ w_{n}\right\} :=$ $\{w_{n}\left(
w_{0},w_{1};a,b,c\right) \}$ in the following table.

$%
\begin{array}{c}
\\ 
\begin{tabular}{|l|l|l|}
\hline
$\{w_{n}\}$ & $\{w_{n}\left( w_{0},w_{1};a,b,c\right) \}$ & generalized
bi-periodic Horadam sequence \\ \hline\hline
$\{u_{n}\}$ & $\{w_{n}\left( 0,1;a,b,c\right) \}$ & generalized bi-periodic
Fibonacci sequence \\ \hline
$\{v_{n}\}$ & $\{w_{n}\left( 2,b;a,b,c\right) \}$ & generalized bi-periodic
Lucas sequence \\ \hline
$\{q_{n}\}$ & $\{w_{n}\left( 0,1;a,b,1\right) \}$ & bi-periodic Fibonacci
sequence \cite{Edson} \\ \hline
$\{p_{n}\}$ & $\{w_{n}\left( 2,a;b,a,1\right) \}$ & bi-periodic Lucas
sequence \cite{Bilgici} \\ \hline
$\{W_{n}\}$ & $\{w_{n}\left( w_{0},w_{1};a,b,1\right) \}$ & bi-periodic
Horadam sequence \cite{Edson} \\ \hline
$\{H_{n}\}$ & $\{w_{n}\left( w_{0},w_{1};p,p,-q\right) \}$ & Horadam
sequence \cite{Horadam} \\ \hline
$\{F_{n}\}$ & $\{w_{n}\left( 0,1;1,1,1\right) \}$ & Fibonacci sequence \\ 
\hline
$\{L_{n}\}$ & $\{w_{n}\left( 2,1;1,1,1\right) \}$ & Lucas sequence \\ \hline
$\{F_{k,n}\}$ & $\{w_{n}\left( 0,1;k,k,1\right) \}$ & $k$-Fibonacci sequence
\\ \hline
$\{L_{k,n}\}$ & $\{w_{n}\left( 0,k;k,k,1\right) \}$ & $k$-Lucas sequence \\ 
\hline
$\{P_{n}\}$ & $\{w_{n}\left( 0,1;2,2,1\right) \}$ & Pell sequence \\ \hline
$\{PL_{n}\}$ & $\{w_{n}\left( 2,2;2,2,1\right) \}$ & Pell-Lucas sequence \\ 
\hline
$\{J_{n}\}$ & $\{w_{n}\left( 0,1;1,1,2\right) \}$ & Jacobsthal sequence \\ 
\hline
$\{JL_{n}\}$ & $\{w_{n}\left( 2,1;1,1,2\right) \}$ & Jacobsthal-Lucas
sequence \\ \hline
\end{tabular}
\\ 
\\ 
Table\text{ }1\label{Table1}:\text{Special cases of the sequence }\{w_{n}\}
\\ 
\\ 
\\ 
\end{array}%
$

The outline of this paper as follows: In Section 2, inspired by the matrix
identity (\ref{2}), we give analogous matrix representations for the
generalized bi-periodic Fibonacci and the generalized bi-periodic Lucas
numbers. Then, we generalize the matrix identity (\ref{2}) to the
generalized bi-periodic Horadam numbers. Thus, one can develop many matrix
identities by choosing appropriate initial values in our matrix formula. We
state several properties of these numbers by using matrix approach which
provides a very simple proof. Section 3 is devoted to obtain more
generalized expressions for the generalized bi-periodic Horadam numbers, by
using the matrix method in \cite{Waddill}. \vspace{1.5cc}

\begin{center}
\textbf{2. MATRIX REPRESENTATIONS FOR $\{u_{n}\}$, $\{v_{n}\}$ and $%
\{w_{n}\} $}
\end{center}

First, we define the matrix $U:=\left( 
\begin{array}{cc}
ab & cb \\ 
a & 0%
\end{array}%
\right) .$ For any nonnegative integer $n,$ by using induction, we have%
\begin{equation}
U^{n}=\left( ab\right) ^{\left\lfloor \frac{n}{2}\right\rfloor }\left( 
\begin{array}{cc}
b^{\zeta \left( n\right) }u_{n+1} & cba^{-\zeta \left( n+1\right) }u_{n} \\ 
a^{\zeta \left( n\right) }u_{n} & cb^{\zeta \left( n\right) }u_{n-1}%
\end{array}%
\right) ,  \label{7}
\end{equation}%
where $u_{n}$ is the $n-$th generalized bi-periodic Fibonacci number. Since
the matrix $U$ is invertible, then%
\begin{equation*}
U^{-n}=\frac{\left( ab\right) ^{\left\lfloor \frac{n}{2}\right\rfloor }}{%
\left( -abc\right) ^{n}}\left( 
\begin{array}{cc}
cb^{\zeta \left( n\right) }u_{n-1} & -cba^{-\zeta \left( n+1\right) }u_{n}
\\ 
-a^{\zeta \left( n\right) }u_{n} & b^{\zeta \left( n\right) }u_{n+1}%
\end{array}%
\right) .
\end{equation*}

By using the matrix identity (\ref{7}) and using the similar method in \cite[%
Theorem 1]{Tan2}, one can obtain the following results which give some basic
properties of $\left\{ u_{n}\right\} $. Note that the results $\left(
1\right) -\left( 3\right) $ can be found in \cite[Theorem 9]{Yayenie}, but
here we obtain these identities by using matrix approach.

\begin{lemma}
\label{l1} The sequence $\{u_{n}\}$ satisfies the following identities:

\begin{enumerate}
\item $%
\begin{array}{c}
\left( \frac{a}{b}\right) ^{\zeta \left( n\right) }u_{n}^{2}-\left( \frac{a}{%
b}\right) ^{\zeta \left( n+1\right) }u_{n-1}u_{n+1}=\frac{a}{b}\left(
-c\right) ^{n-1}%
\end{array}%
$

\item $%
\begin{array}{c}
\left( \frac{b}{a}\right) ^{\zeta \left( mn+n\right) }u_{m}u_{n+1}+\left( 
\frac{b}{a}\right) ^{\zeta \left( mn+m\right) }cu_{n}u_{m-1}=u_{n+m}%
\end{array}%
$

\item $%
\begin{array}{c}
\left( \frac{b}{a}\right) ^{\zeta \left( mn+n\right) }u_{n}u_{m+1}-\left( 
\frac{b}{a}\right) ^{\zeta \left( mn+m\right) }u_{m}u_{n+1}=\left( -c\right)
^{m}u_{n-m}%
\end{array}%
$

\item $%
\begin{array}{c}
\left( \frac{b}{a}\right) ^{\zeta \left( mn+n\right) }u_{m}u_{n-m+1}+c\left( 
\frac{b}{a}\right) ^{\zeta \left( mn\right) }u_{m-1}u_{n-m}=u_{n}%
\end{array}%
$
\end{enumerate}
\end{lemma}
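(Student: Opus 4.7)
The plan is to deduce all four identities from the matrix identity (\ref{7}) together with the stated formula for $U^{-n}$, via routine matrix algebra. Two structural inputs drive the proofs: the determinant $\det U=-abc$, and the parity bookkeeping identities $\zeta(n)+\zeta(n+1)=1$, $\lfloor(m+n)/2\rfloor=\lfloor m/2\rfloor+\lfloor n/2\rfloor+\zeta(m)\zeta(n)$, and $\zeta(mn+n)=\zeta\bigl(n(m+1)\bigr)=\zeta(n)\bigl(1-\zeta(m)\bigr)$, each of which is an elementary parity check.

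For (1) I would take determinants in (\ref{7}). Since $\det U^{n}=(-abc)^{n}$, expanding the determinant on the right of (\ref{7}) produces $(ab)^{2\lfloor n/2\rfloor}c\bigl(b^{2\zeta(n)}u_{n+1}u_{n-1}-b\,a^{\zeta(n)-\zeta(n+1)}u_{n}^{2}\bigr)$. Using $2\lfloor n/2\rfloor=n-\zeta(n)$ and the complementarity $\zeta(n)+\zeta(n+1)=1$ to consolidate exponents on both sides then gives (1).

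For (2) and (4) I would equate the $(2,1)$-entry on both sides of $U^{m+n}=U^{m}U^{n}$ and $U^{n}=U^{m}U^{n-m}$, respectively. Direct substitution of (\ref{7}) yields, for (2), a combination proportional to $a^{\zeta(m)}b^{\zeta(n)}u_{m}u_{n+1}+c\,a^{\zeta(n)}b^{\zeta(m)}u_{m-1}u_{n}$, and for (4) the analogue with $u_{n-m+1}$ and $u_{n-m}$ in place of $u_{n+1}$ and $u_{n}$. A four-case parity check on $(m,n)$ then shows that the prefactors, once combined with the floor discrepancy $(ab)^{\zeta(m)\zeta(n)}$ (respectively $(ab)^{\zeta(m)\zeta(n-m)}$), collapse to the stated $(b/a)^{\zeta(mn+n)}$ and $(b/a)^{\zeta(mn+m)}$ for (2), and to $(b/a)^{\zeta(mn+n)}$ and $(b/a)^{\zeta(mn)}$ for (4). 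For (3) I would run the same argument on $U^{n-m}=U^{-m}U^{n}$: substituting the stated formula for $U^{-m}$ into the $(2,1)$-entry produces $a^{\zeta(n)}b^{\zeta(m)}u_{n}u_{m+1}-a^{\zeta(m)}b^{\zeta(n)}u_{m}u_{n+1}$ on the product side and $(-c)^{m}(ab)^{\zeta(m)\zeta(n)}a^{\zeta(n-m)}u_{n-m}$ on the $U^{n-m}$ side, and the parity-by-parity rescaling rearranges this to (3).

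The main obstacle is not conceptual but bookkeeping: throughout one must carefully track how the factors $(ab)^{\lfloor\cdot/2\rfloor}$, the parity exponents $\zeta(\cdot)$, and the $\pm 1$ corrections coming from the floor identity combine across the four parity cases for $(m,n)$. A single misplaced exponent would spoil the final cancellation, so each case has to be written out carefully; once this is done, however, every step is a direct computation from (\ref{7}) and its inverse.
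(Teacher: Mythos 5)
Your proposal is correct and is exactly the argument the paper has in mind: the paper gives no written proof but simply points to the matrix identity (\ref{7}) and "the similar method in [Tan2, Theorem 1]," which is precisely taking determinants for (1) and equating the $(2,1)$-entries of $U^{m+n}=U^{m}U^{n}$, $U^{n-m}=U^{-m}U^{n}$, and $U^{n}=U^{m}U^{n-m}$ for (2)--(4). The parity bookkeeping you describe (via $\zeta(m+n)=\zeta(m)+\zeta(n)-2\zeta(m)\zeta(n)$ and $\lfloor(m+n)/2\rfloor=\lfloor m/2\rfloor+\lfloor n/2\rfloor+\zeta(m)\zeta(n)$) does collapse the prefactors to the stated $(b/a)$-powers, so the sketch goes through.
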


Now we consider the matrix equality%
\begin{equation}
K:=\frac{1}{2}\left( 
\begin{array}{cc}
ab & \Delta \\ 
1 & ab%
\end{array}%
\right) \Rightarrow K^{n}=\frac{\left( ab\right) ^{\left\lfloor \frac{n}{2}%
\right\rfloor }}{2}\left( 
\begin{array}{cc}
a^{\zeta \left( n\right) }v_{n} & \Delta a^{\zeta \left( n\right) -1}u_{n}
\\ 
a^{\zeta \left( n\right) -1}u_{n} & a^{\zeta \left( n\right) }v_{n}%
\end{array}%
\right) ,  \label{18}
\end{equation}%
where $\Delta :=a^{2}b^{2}+4abc\neq 0.$ By using the method in \cite[Theorem
4]{Tan2}, one can obtain the following results which give some relations
involving both the generalized bi-periodic Fibonacci and the generalized
bi-periodic Lucas numbers.

\begin{lemma}
\label{l}The sequences $\{u_{n}\}$ and $\{v_{n}\}$ satisfy the following
identities:

\begin{enumerate}
\item $%
\begin{array}{c}
v_{n}^{2}-\frac{\Delta }{a^{2}}u_{n}^{2}=4\left( \frac{b}{a}\right) ^{\zeta
\left( n\right) }\left( -c\right) ^{n}%
\end{array}%
$

\item $%
\begin{array}{c}
v_{m}v_{n}+\frac{\Delta }{a^{2}}u_{m}u_{n}=2\left( \frac{b}{a}\right)
^{\zeta \left( n\right) \zeta \left( m\right) }v_{n+m}%
\end{array}%
$

\item $%
\begin{array}{c}
u_{m}v_{n}+u_{n}v_{m}=2\left( \frac{b}{a}\right) ^{\zeta \left( n\right)
\zeta \left( m\right) }u_{n+m}%
\end{array}%
$

\item $%
\begin{array}{c}
v_{m}v_{n}-\frac{\Delta }{a^{2}}u_{m}u_{n}=2\left( -c\right) ^{m}\left( 
\frac{a}{b}\right) ^{-\zeta \left( n\right) \zeta \left( m\right) }v_{n-m}%
\end{array}%
$

\item $%
\begin{array}{c}
u_{n}v_{m}-u_{m}v_{n}=2\left( -c\right) ^{m}\left( \frac{a}{b}\right)
^{-\zeta \left( n\right) \zeta \left( m\right) }u_{n-m}%
\end{array}%
$

\item $%
\begin{array}{c}
v_{n+m}+\left( -c\right) ^{m}v_{n-m}=\left( \frac{a}{b}\right) ^{\zeta
\left( n\right) \zeta \left( m\right) }v_{m}v_{n}%
\end{array}%
$

\item $%
\begin{array}{c}
u_{n+m}+\left( -c\right) ^{m}u_{n-m}=\left( \frac{a}{b}\right) ^{\zeta
\left( n\right) \zeta \left( m\right) }u_{n}v_{m}.%
\end{array}%
$
\end{enumerate}
\end{lemma}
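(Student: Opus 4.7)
The plan is to exploit the matrix identity~(\ref{18}) in exactly the same spirit as~(\ref{7}) was used for Lemma~\ref{l1}: each of the seven identities should drop out of a single matrix relation after reading off an entry and clearing the scalar prefactors. Three elementary parity identities will carry the bookkeeping throughout, namely $2\lfloor n/2\rfloor=n-\zeta(n)$, $\lfloor(n+m)/2\rfloor-\lfloor n/2\rfloor-\lfloor m/2\rfloor=\zeta(n)\zeta(m)$, and $\zeta(n+m)=\zeta(n)+\zeta(m)-2\zeta(n)\zeta(m)$.

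For~(1) I would take determinants. From $\det K=\tfrac14(a^{2}b^{2}-\Delta)=-abc$ we obtain $\det K^{n}=(-abc)^{n}$, while the explicit form of $K^{n}$ in~(\ref{18}) has determinant $\tfrac{(ab)^{2\lfloor n/2\rfloor}a^{2\zeta(n)-2}}{4}\bigl(a^{2}v_{n}^{2}-\Delta u_{n}^{2}\bigr)$. Equating the two expressions and dividing by $a^{2}$ produces~(1).

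For~(2) and~(3) the starting point is the product $K^{n+m}=K^{n}K^{m}$; when both sides are expanded via~(\ref{18}), the diagonal entries match to give~(2) (the combination $a^{2}v_{n}v_{m}+\Delta u_{n}u_{m}$ appears automatically), and the off-diagonal entries match to give~(3). For~(4) and~(5) I would replace $K^{m}$ by its inverse, obtained from the cofactor formula together with $\det K^{m}=(-abc)^{m}$:
\begin{equation*}
(K^{m})^{-1}=\frac{(ab)^{\lfloor m/2\rfloor}}{2(-abc)^{m}}\begin{pmatrix} a^{\zeta(m)}v_{m} & -\Delta a^{\zeta(m)-1}u_{m}\\ -a^{\zeta(m)-1}u_{m} & a^{\zeta(m)}v_{m}\end{pmatrix},
\end{equation*}
and read off the entries of $K^{n-m}=K^{n}(K^{m})^{-1}$.

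Finally, identities~(6) and~(7) fall out by combining the previous ones. Since $(a/b)^{-\zeta(n)\zeta(m)}=(b/a)^{\zeta(n)\zeta(m)}$, adding~(2) and~(4) cancels the $u_{m}u_{n}$ term and, after multiplying through by $(a/b)^{\zeta(n)\zeta(m)}$, produces~(6); adding~(3) and~(5) yields~(7) in the same way. The main obstacle in executing this plan is not conceptual but notational: keeping the powers of $a$ and $b$ that arise from combining the $(ab)^{\lfloor\cdot/2\rfloor}$ factors with the $a^{\zeta(\cdot)}$ factors under control, so that they collapse to exactly the factors $(b/a)^{\zeta(n)\zeta(m)}$ and $(a/b)^{-\zeta(n)\zeta(m)}$ appearing in the statement. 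In practice this is most safely done by verifying the three parity cases of $(n,m)$ separately.
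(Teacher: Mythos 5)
Your proposal is correct and follows essentially the same route the paper intends: the paper simply defers to "the method of [Tan2, Theorem 4]," which is exactly the matrix argument you describe — determinants of (\ref{18}) for (1), the entries of $K^{n}K^{m}$ and $K^{n}(K^{m})^{-1}$ for (2)--(5), and addition of those identities for (6)--(7). The parity bookkeeping identities you list do collapse the $(ab)^{\lfloor\cdot/2\rfloor}$ and $a^{\zeta(\cdot)}$ factors to the stated $(b/a)^{\zeta(n)\zeta(m)}$ prefactors, so the plan goes through as written.
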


We define the matrix $H:=K+abcK^{-1}=\left( 
\begin{array}{cc}
0 & \Delta \\ 
1 & 0%
\end{array}%
\right) .$ It is clear that we have the matrix relation%
\begin{equation}
K^{n}=\frac{\left( ab\right) ^{\left\lfloor \frac{n}{2}\right\rfloor }}{2}%
\left( a^{\zeta \left( n\right) -1}u_{n}H+a^{\zeta \left( n\right)
}v_{n}I\right) .  \label{19}
\end{equation}%
Also from the relation%
\begin{equation}
w_{n}=u_{n}w_{1}+c\left( \frac{b}{a}\right) ^{\zeta \left( n\right)
}u_{n-1}w_{0},  \label{a}
\end{equation}%
%
%
%
we have $v_{n}=bu_{n}+2c\left( \frac{b}{a}\right) ^{\zeta \left( n\right)
}u_{n-1}.$ Then we have%
\begin{equation}
K^{n}=\left( ab\right) ^{\left\lfloor \frac{n}{2}\right\rfloor }\left(
a^{\zeta \left( n\right) -1}u_{n}K+cb^{\zeta \left( n\right)
}u_{n-1}I\right) .  \label{20}
\end{equation}%
By using the matrix relations (\ref{19}) and (\ref{20}), we give Theorem \ref%
{t2} and Theorem \ref{t3}, repectively.

\begin{theorem}
\label{t2}Let $D:=1-a^{\zeta \left( m\right) }v_{m}+\left( ab\right) ^{\zeta
\left( m\right) }\left( -c\right) ^{m}\neq 0,$ then%
\begin{eqnarray*}
&&\sum_{j=0}^{n}\left( ab\right) ^{\left\lfloor \frac{mj+r}{2}\right\rfloor
}a^{\zeta \left( mj+r\right) -1}u_{mj+r} \\
&=&\frac{1}{D}\left( \left( ab\right) ^{\left\lfloor \frac{r}{2}%
\right\rfloor }a^{\zeta \left( r\right) -1}\left( u_{r}-\left( -c\right)
^{m}a^{\zeta \left( m\right) \zeta \left( r+1\right) }b^{\zeta \left(
m\right) \zeta \left( r\right) }u_{r-m}\right) \right. \\
&&\left. -\left( ab\right) ^{\left\lfloor \frac{mn+m+r}{2}\right\rfloor
}a^{\zeta \left( mn+m+r\right) -1}\right. \\
&&\left. \times \left( u_{mn+m+r}+\left( -c\right) ^{m}a^{\zeta \left(
m\right) \zeta \left( mn+m+r+1\right) }b^{\zeta \left( m\right) \zeta \left(
mn+m+r\right) }u_{mn+r}\right) \right) ,
\end{eqnarray*}%
\begin{eqnarray*}
&&\sum_{j=0}^{n}\left( ab\right) ^{\left\lfloor \frac{mj+r}{2}\right\rfloor
}a^{\zeta \left( mj+r\right) }v_{mj+r} \\
&=&\frac{1}{D}\left( \left( ab\right) ^{\left\lfloor \frac{r}{2}%
\right\rfloor }a^{\zeta \left( r\right) }\left( v_{r}-\left( -c\right)
^{m}a^{\zeta \left( m\right) \zeta \left( r+1\right) }b^{\zeta \left(
m\right) \zeta \left( r\right) }v_{r-m}\right) \right. \\
&&\left. -\left( ab\right) ^{\left\lfloor \frac{mn+m+r}{2}\right\rfloor
}a^{\zeta \left( mn+m+r\right) }\right. \\
&&\left. \times \left( v_{mn+m+r}+\left( -c\right) ^{m}a^{\zeta \left(
m\right) \zeta \left( mn+m+r+1\right) }b^{\zeta \left( m\right) \zeta \left(
mn+m+r\right) }v_{mn+r}\right) \right) .
\end{eqnarray*}
\end{theorem}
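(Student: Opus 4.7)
Since $H^{2}=\Delta I$, the subring $\mathbb{R}\langle I,H\rangle\subset M_{2}(\mathbb{R})$ is commutative, and the identity (\ref{19}) can be rewritten as $2K^{n}=\alpha_{n}H+\beta_{n}I$ with $\alpha_{n}:=(ab)^{\lfloor n/2\rfloor}a^{\zeta(n)-1}u_{n}$ and $\beta_{n}:=(ab)^{\lfloor n/2\rfloor}a^{\zeta(n)}v_{n}$. Hence the two sums in Theorem \ref{t2} are precisely twice the $H$-component and the $I$-component of $\sum_{j=0}^{n}K^{mj+r}$, and the whole proof reduces to evaluating that matrix geometric sum in closed form.

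In the commutative ring $\mathbb{R}\langle I,H\rangle$ the finite geometric series formula applies, giving
\[
\sum_{j=0}^{n}K^{mj+r}=(K^{mn+m+r}-K^{r})(K^{m}-I)^{-1},
\]
whenever $K^{m}-I$ is a unit. I would invert $K^{m}-I$ via the involution $\sigma\colon H\mapsto-H$. A direct calculation from $K=(H+abI)/2$ gives $K\sigma(K)=-abc\cdot I$, so $\sigma(K^{m})=(-abc)^{m}K^{-m}$. Expanding
\[
(K^{m}-I)(\sigma(K^{m})-I)=(-abc)^{m}-\bigl(K^{m}+\sigma(K^{m})\bigr)+I
\]
and using (\ref{19}) to evaluate $K^{m}+\sigma(K^{m})$ together with Lemma \ref{l}(1) shows this product equals $D\cdot I$; hence $(K^{m}-I)^{-1}=\tfrac{1}{D}((-abc)^{m}K^{-m}-I)$. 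Distributing,
\[
\sum_{j=0}^{n}K^{mj+r}=\tfrac{1}{D}\Bigl(K^{r}-K^{mn+m+r}+(-abc)^{m}\bigl(K^{mn+r}-K^{r-m}\bigr)\Bigr).
\]

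To finish, I would apply (\ref{19}) to each of the four powers $K^{r}$, $K^{mn+m+r}$, $K^{mn+r}$, $K^{r-m}$ on the right and read off the $H$- and $I$-coefficients; the $H$-component yields the first displayed identity (the sum of $u$'s) and the $I$-component yields the second (the sum of $v$'s). The main obstacle I anticipate is the last step of parity bookkeeping: the raw coefficient of $u_{r-m}$ produced in this way is $-(-abc)^{m}(ab)^{\lfloor(r-m)/2\rfloor}a^{\zeta(r-m)-1}$ and has to be repackaged in the stated form $-(ab)^{\lfloor r/2\rfloor}a^{\zeta(r)-1}(-c)^{m}a^{\zeta(m)\zeta(r+1)}b^{\zeta(m)\zeta(r)}$, with the analogous repackaging needed for the $u_{mn+r}$ and $v_{mn+r}$ coefficients at the other end. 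This reduces to a short case analysis on the parities of $m$, $r$, and $mn+r$, driven by the elementary identities $\zeta(k\pm m)\equiv\zeta(k)+\zeta(m)\pmod{2}$ and the corresponding piecewise formulas for $\lfloor(k\pm m)/2\rfloor-\lfloor k/2\rfloor$; everything prior to this step is essentially mechanical.
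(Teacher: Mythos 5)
Your strategy is essentially the paper's --- both arguments rest on the geometric series $(I-K^{m})\sum_{j=0}^{n}K^{mj}=I-K^{m(n+1)}$ followed by reading off the two components of $K^{mj+r}$ via (\ref{19}) --- but your packaging is cleaner: the paper inverts $I-K^{m}$ by writing out the $2\times 2$ adjugate and comparing matrix entries, whereas you work in the commutative ring $\mathbb{R}[H]/(H^{2}-\Delta)$ and invert via the conjugation $\sigma(H)=-H$, using $K\sigma(K)=-abc\,I$. Everything up to and including your four-term expression
\begin{equation*}
\sum_{j=0}^{n}K^{mj+r}=\frac{1}{D}\left(K^{r}-K^{mn+m+r}+(-abc)^{m}\left(K^{mn+r}-K^{r-m}\right)\right)
\end{equation*}
is correct, \emph{provided} $D$ is the actual norm $N(K^{m}-I)=1-(ab)^{\left\lfloor m/2\right\rfloor}a^{\zeta(m)}v_{m}+(-abc)^{m}$, which is what your computation of $(K^{m}-I)(\sigma(K^{m})-I)$ produces; it is not the $D$ defined in the theorem.

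The gap is the step you defer as ``parity bookkeeping.'' It is not a routine case analysis: the raw coefficient of $u_{r-m}$ coming out of your formula, namely $-(-abc)^{m}(ab)^{\left\lfloor (r-m)/2\right\rfloor}a^{\zeta(r-m)-1}$, differs from the stated coefficient by a factor of order $(ab)^{\left\lfloor m/2\right\rfloor}$ (for $m,r$ both even the exponent of $ab$ is $\left\lfloor r/2\right\rfloor+m/2$ on your side versus $\left\lfloor r/2\right\rfloor$ in the statement), and your $D$ likewise differs from the theorem's $D$ by $(ab)^{\left\lfloor m/2\right\rfloor}$-type factors. These discrepancies cannot be reconciled by parity identities; indeed the printed identity fails numerically (take $a=2$, $b=c=1$, $m=2$, $r=0$, $n=1$: the left-hand side equals $2$, the printed right-hand side equals $19/2$, while your four-term formula with the true norm $D=-3$ gives $2$). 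The source of the mismatch is the paper's own evaluation of $\det(I-K^{m})$, which silently drops the prefactor $(ab)^{\left\lfloor m/2\right\rfloor}$ of $K^{m}$ in (\ref{18}); the two normalizations agree only for $m\in\{0,1\}$. So your derivation is sound --- arguably more reliable than the paper's --- but it proves a corrected version of Theorem \ref{t2}, not the statement as printed, and the final identification you postpone is precisely where the attempt cannot be completed. You should either record your four-term formula (with the true $D$) as the conclusion, or carry out the reduction explicitly and note where the stated coefficients must be amended.
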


\begin{proof}
It is clear that%
\begin{equation*}
I-\left( K^{m}\right) ^{n+1}=\left( I-K^{m}\right) \sum_{j=0}^{n}K^{mj}.
\end{equation*}%
Since 
\begin{equation*}
\det \left( I-K^{m}\right) =1-a^{\zeta \left( m\right) }v_{m}+\left(
ab\right) ^{\zeta \left( m\right) }\left( -c\right) ^{m}0\neq 0,
\end{equation*}%
then%
\begin{eqnarray*}
\left( I-K^{m}\right) ^{-1} &=&\frac{1}{D}\left( 
\begin{array}{cc}
1-a^{\zeta \left( m\right) }\frac{v_{m}}{2} & \Delta a^{\zeta \left(
m\right) -1}\frac{u_{m}}{2} \\ 
a^{\zeta \left( m\right) -1}\frac{u_{m}}{2} & 1-a^{\zeta \left( m\right) }%
\frac{v_{m}}{2}%
\end{array}%
\right) \\
&=&\frac{1}{D}\left( \left( 1-a^{\zeta \left( m\right) }\frac{v_{m}}{2}%
\right) I+a^{\zeta \left( m\right) -1}\frac{u_{m}}{2}H\right) .
\end{eqnarray*}%
By using the matrix identity (\ref{18}), we have%
\begin{equation*}
\left( I-K^{m}\right) ^{-1}\left( I-\left( K^{m}\right) ^{n+1}\right) K^{r}%
\text{\ \ \ \ \ \ \ \ \ \ \ \ \ \ \ \ \ \ \ \ \ \ \ \ \ \ \ \ \ \ \ \ \ \ \
\ \ \ \ \ \ \ \ \ \ \ \ \ \ \ \ \ \ \ \ \ \ \ \ \ \ \ \ \ \ \ \ \ \ \ \ \ \
\ \ \ }
\end{equation*}%
\begin{equation*}
=\sum_{j=0}^{n}K^{mj+r}\text{ \ \ \ \ \ \ \ \ \ \ \ \ \ \ \ \ \ \ \ \ \ \ \
\ \ \ \ \ \ \ \ \ \ \ \ \ \ \ \ \ \ \ \ \ \ \ \ \ \ \ \ \ \ \ \ \ \ \ \ \ \
\ \ \ \ \ \ \ \ \ \ \ \ \ \ \ \ \ \ \ \ \ \ \ \ \ \ \ \ \ \ \ \ \ \ \ \ \ }
\end{equation*}%
\begin{equation}
=\left( 
\begin{array}{cc}
\sum_{j=0}^{n}\left( ab\right) ^{\left\lfloor \frac{mj+r}{2}\right\rfloor
}a^{\zeta \left( mj+r\right) }\frac{v_{mj+r}}{2} & \Delta
\sum_{j=0}^{n}\left( ab\right) ^{\left\lfloor \frac{mj+r}{2}\right\rfloor
}a^{\zeta \left( mj+r\right) -1}\frac{u_{mj+r}}{2} \\ 
\sum_{j=0}^{n}\left( ab\right) ^{\left\lfloor \frac{mj+r}{2}\right\rfloor
}a^{\zeta \left( mj+r\right) -1}\frac{u_{mj+r}}{2} & \sum_{j=0}^{n}\left(
ab\right) ^{\left\lfloor \frac{mj+r}{2}\right\rfloor }a^{\zeta \left(
mj+r\right) }\frac{v_{mj+r}}{2}%
\end{array}%
\right) .  \label{*}
\end{equation}%
On the other hand, we have%
\begin{equation*}
\left( I-K^{m}\right) ^{-1}\left( K^{r}-K^{mn+m+r}\right) \text{ \ \ \ \ \ \
\ \ \ \ \ \ \ \ \ \ \ \ \ \ \ \ \ \ \ \ \ \ \ \ \ \ \ \ \ \ \ \ \ \ \ \ \ \
\ \ \ \ \ \ \ \ \ \ \ \ \ \ \ \ \ \ \ \ \ \ \ \ \ \ \ \ \ \ \ \ \ \ \ \ \ \
\ \ \ \ \ \ \ \ \ }
\end{equation*}%
\begin{equation*}
=\frac{1}{D}\left( \left( 1-a^{\zeta \left( m\right) }\frac{v_{m}}{2}\right)
I+a^{\zeta \left( m\right) -1}\frac{u_{m}}{2}H\right) \left(
K^{r}-K^{mn+m+r}\right) \text{ \ \ \ \ \ \ \ \ \ \ \ \ \ \ \ \ \ \ \ \ \ }
\end{equation*}%
\begin{equation*}
=\frac{1}{D}\left( \left( 1-a^{\zeta \left( m\right) }\frac{v_{m}}{2}\right)
\left( K^{r}-K^{mn+m+r}\right) +a^{\zeta \left( m\right) -1}\frac{u_{m}}{2}%
H\left( K^{r}-K^{mn+m+r}\right) \right)
\end{equation*}%
\begin{equation}
=\frac{1}{D}\left( \left( 1-a^{\zeta \left( m\right) }\frac{v_{m}}{2}\right)
\left( 
\begin{array}{cc}
X & \Delta Y \\ 
Y & X%
\end{array}%
\right) +a^{\zeta \left( m\right) -1}\frac{u_{m}}{2}\left( 
\begin{array}{cc}
\Delta Y & \Delta X \\ 
X & \Delta Y%
\end{array}%
\right) \right) \text{ \ \ \ \ \ \ \ \ \ \ }  \label{***}
\end{equation}%
where%
\begin{equation*}
X:=\left( ab\right) ^{\left\lfloor \frac{r}{2}\right\rfloor }a^{\zeta \left(
r\right) }\frac{v_{r}}{2}-\left( ab\right) ^{\left\lfloor \frac{mn+m+r}{2}%
\right\rfloor }a^{\zeta \left( mn+m+r\right) }\frac{v_{mn+m+r}}{2},\text{ \
\ \ \ \ }
\end{equation*}%
\begin{equation*}
Y:=\left( ab\right) ^{\left\lfloor \frac{r}{2}\right\rfloor }a^{\zeta \left(
r\right) -1}\frac{u_{r}}{2}-\left( ab\right) ^{\left\lfloor \frac{mn+m+r}{2}%
\right\rfloor }a^{\zeta \left( mn+m+r\right) -1}\frac{u_{mn+m+r}}{2}.
\end{equation*}
By equating the corresponding entries of (\ref{*}) and (\ref{***}), and
using the identity \textit{5.} of Lemma \ref{l}, we get the desired result.
The remaining result can be proven similarly by using the identity \textit{4.%
} of Lemma \ref{l}.
\end{proof}

\begin{theorem}
\label{t3}For any nonnegative integers $n,r$ and $m$ with $m>1$, we have%
\begin{equation*}
u_{mn+r}=\frac{a^{1-\zeta \left( mn+r\right) }}{\left( ab\right)
^{\left\lfloor \frac{mn+r}{2}\right\rfloor }}\sum_{i=0}^{n}\dbinom{n}{i}%
c^{n-i}u_{m}^{i}u_{m-1}^{n-i}u_{i+r}\delta \lbrack m,n,r,i],
\end{equation*}%
\begin{equation*}
v_{mn+r}=\frac{a^{1-\zeta \left( mn+r\right) }}{\left( ab\right)
^{\left\lfloor \frac{mn+r}{2}\right\rfloor }}\sum_{i=0}^{n}\dbinom{n}{i}%
c^{n-i}u_{m}^{i}u_{m-1}^{n-i}v_{i+r}\delta \lbrack m,n,r,i]
\end{equation*}%
where%
\begin{equation*}
\delta \lbrack m,n,r,i]:=\left( ab\right) ^{\left\lfloor \frac{i+r}{2}%
\right\rfloor +n\left\lfloor \frac{m}{2}\right\rfloor }a^{-\zeta \left(
m+1\right) i-1+\zeta \left( i+r\right) }b^{\zeta \left( m\right) \left(
n-i\right) }.
\end{equation*}
\end{theorem}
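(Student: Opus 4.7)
The plan is to raise the matrix relation (\ref{20}) to the $n$-th power (after specializing it to $K^m$), exploit the fact that $K$ and $I$ commute in order to apply the binomial theorem, and then convert back to scalar identities by invoking (\ref{18}) entrywise. Concretely, I would first substitute $m$ for $n$ in (\ref{20}) to write
\begin{equation*}
K^{m} \;=\; (ab)^{\lfloor m/2\rfloor}\bigl(a^{\zeta(m)-1}u_{m}\,K \,+\, c\,b^{\zeta(m)}u_{m-1}\,I\bigr),
\end{equation*}
so that $K^m$ is an explicit $\mathbb{R}$-linear combination of the commuting matrices $K$ and $I$. Raising both sides to the $n$-th power via the binomial theorem, multiplying by $K^{r}$, and using $\zeta(m)-1=-\zeta(m+1)$ then produces the expansion
\begin{equation*}
K^{mn+r} \;=\; (ab)^{n\lfloor m/2\rfloor}\sum_{i=0}^{n}\binom{n}{i}\, a^{-\zeta(m+1)i}\,b^{\zeta(m)(n-i)}\,c^{\,n-i}\,u_{m}^{\,i}\,u_{m-1}^{\,n-i}\,K^{\,i+r}.
\end{equation*}

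To convert this back into identities for $u_{mn+r}$ and $v_{mn+r}$, I would apply the closed form (\ref{18}) to both sides of the above display: to the single matrix $K^{mn+r}$ on the left, and to each matrix $K^{i+r}$ on the right. The four scalar equations obtained from matching entries collapse to two by the symmetry of the entries in (\ref{18}): comparing the $(2,1)$-entries and solving for $u_{mn+r}$ yields the first formula of the theorem, while comparing the $(1,1)$-entries and solving for $v_{mn+r}$ yields the second. In both cases the quantity $\delta[m,n,r,i]$ is exactly what emerges after moving the common prefactor $\tfrac{1}{2}(ab)^{\lfloor(mn+r)/2\rfloor}$, together with the appropriate power of $a$, across the equation.

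I expect the only real obstacle to be notational bookkeeping: one must keep simultaneous track of the floor functions $\lfloor(mn+r)/2\rfloor$, $\lfloor(i+r)/2\rfloor$ and $\lfloor m/2\rfloor$, together with four parity-dependent exponents on $a$ and $b$. However, no new identity beyond $\zeta(m)-1=-\zeta(m+1)$ and the multiplicativity of $(ab)^{\,\cdot\,}$ is needed, so once the prefactors are carefully reorganized the residual powers of $(ab)$, $a$, and $b$ on the right-hand side assemble themselves into $\delta[m,n,r,i]$ as defined.
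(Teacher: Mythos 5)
Your proposal is correct and is exactly the argument the paper intends: express $K^m$ via (\ref{20}) as a linear combination of the commuting matrices $K$ and $I$, expand $K^{mn+r}=(K^m)^nK^r$ by the binomial theorem, and equate the $(2,1)$- and $(1,1)$-entries with the closed form (\ref{18}) to extract the $u$- and $v$-identities respectively. The bookkeeping of the powers of $a$, $b$, and $ab$ that you describe does assemble into $\delta[m,n,r,i]$ as claimed (using $\zeta(m)-1=-\zeta(m+1)$), so nothing is missing.
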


\begin{proof}
By considering the matrix identities (\ref{20}) and (\ref{18}), then
equating the corresponding entries we obtain the desired results.
\end{proof}

Note that Theorem \ref{t2} and Theorem \ref{t3} can be seen as a
generalization of the results in \cite{siar}.

Finally, we define the matrix $T:=\left( 
\begin{array}{cc}
abw_{1}+cbw_{0} & cbw_{1} \\ 
aw_{1} & cbw_{0}%
\end{array}%
\right) .$ By induction we have%
\begin{equation}
TU^{n}=\left( ab\right) ^{\left\lfloor \frac{n+1}{2}\right\rfloor }\left( 
\begin{array}{cc}
b^{\zeta \left( n+1\right) }w_{n+2} & cba^{-\zeta \left( n\right) }w_{n+1}
\\ 
a^{\zeta \left( n+1\right) }w_{n+1} & cb^{\zeta \left( n+1\right) }w_{n}%
\end{array}%
\right) ,  \label{8}
\end{equation}%
where $w_{n}$ is the $n-$th generalized bi-periodic Horadam number.

If we take the determinant of both sides of the equation (\ref{8}) and
taking $n\rightarrow n-1$, we obtain the Cassini's identity for the sequence 
$\left\{ w_{n}\right\} $ as%
\begin{equation}
\left( \frac{b}{a}\right) ^{\zeta \left( n\right) }w_{n-1}w_{n+1}-\left( 
\frac{b}{a}\right) ^{\zeta \left( n+1\right) }w_{n}^{2}=\left( -1\right)
^{n}c^{n-1}\left( w_{1}^{2}-bw_{0}w_{1}-c\frac{b}{a}w_{0}^{2}\right) .
\label{c}
\end{equation}

The matrix $T$ can be written as%
\begin{equation}
T=cbw_{0}I+w_{1}U,  \label{9}
\end{equation}%
where $I$ is the $2\times 2$ unit matrix. It is easy to see that%
\begin{equation}
TU^{n}=cbw_{0}U^{n}+w_{1}U^{n+1}.  \label{10}
\end{equation}%
If we equate the corresponding entries of the matrix equality (\ref{10}), we
get the identity (\ref{a}). Also, the generalized bi-periodic Horadam
numbers for negative subscripts can be defined as%
\begin{equation}
w_{-n}=-\frac{a^{\zeta \left( n+1\right) }b^{\zeta \left( n\right) }}{c}%
w_{-n+1}+\frac{1}{c}w_{-n+2},  \label{11}
\end{equation}%
so that the matrix identity (\ref{8}) holds for every integer $n$. From (\ref%
{10}), we have%
\begin{equation}
\left( -c\right) ^{n}w_{-n}=\left( \frac{b}{a}\right) ^{\zeta \left(
n\right) }w_{0}u_{n+1}-w_{1}u_{n},  \label{12}
\end{equation}%
which reduce to%
\begin{equation}
\begin{array}{ccc}
u_{-n}=\frac{\left( -1\right) ^{n+1}}{c^{n}}u_{n} & \text{and} & v_{-n}=%
\frac{\left( -1\right) ^{n}}{c^{n}}v_{n}%
\end{array}
\label{6}
\end{equation}%
for the generalized bi-periodic Fibonacci and Lucas numbers, respectively. 
\vspace{1.5cc}

\begin{center}
\textbf{3. MORE GENERAL RESULTS FOR $\{w_{n}\}$}
\end{center}

Besides the matrix $U$, the $n^{\text{th}}$ power of the matrix $A:=\left( 
\begin{array}{cc}
ab & abc \\ 
1 & 0%
\end{array}%
\right)$ also has entries involving generalized bi-periodic Fibonacci
numbers; that is, 
\begin{align}  \label{30}
A^n=\left( 
\begin{array}{cc}
ab & abc \\ 
1 & 0%
\end{array}%
\right)^n =\left( ab\right) ^{\left\lfloor \frac{n}{2}\right\rfloor }\left( 
\begin{array}{cc}
b^{\zeta \left( n\right) }u_{n+1} & cba^{\zeta \left( n\right) }u_{n} \\ 
a^{-\zeta \left( n+1\right) }u_{n} & cb^{\zeta \left( n\right) }u_{n-1}%
\end{array}%
\right).
\end{align}%
If $n$ is even, then 
\begin{align}  \label{31}
A^n \left(%
\begin{array}{c}
w_1 \\ 
a^{-1} w_0%
\end{array}
\right)=(ab)^{\frac{n}{2}} \left( 
\begin{array}{c}
w_{n+1} \\ 
a^{-1} w_n%
\end{array}
\right), \quad A^n \left( 
\begin{array}{c}
cb w_2 \\ 
c w_1%
\end{array}%
\right) =(ab)^{\frac{n}{2}} \left(%
\begin{array}{c}
cb w_{n+2} \\ 
c w_{n+1}%
\end{array}
\right)
\end{align}%
By combining the equations (\ref{30}) and (\ref{31}) for even $n$, 
\begin{align}  \label{32}
\begin{pmatrix}
w_{n+1} \\ 
a^{-1} w_n%
\end{pmatrix}%
&= 
\begin{pmatrix}
u_{n+1} & cb u_{n} \\ 
a^{-1 }u_{n} & c u_{n-1}%
\end{pmatrix}%
\begin{pmatrix}
w_1 \\ 
a^{-1} w_0%
\end{pmatrix}%
, 
\begin{pmatrix}
cbw_{n+2} \\ 
c w_{n+1}%
\end{pmatrix}
& = 
\begin{pmatrix}
u_{n+1} & cb u_{n} \\ 
a^{-1 }u_{n} & c u_{n-1}%
\end{pmatrix}%
\begin{pmatrix}
cb w_2 \\ 
c w_1%
\end{pmatrix}%
.
\end{align}%
We get (\ref{a}) by comparing entries in (\ref{32}). We generalize it
further as follows.

\begin{theorem}
\label{t4} Let $n$ and $p$ be any positive integers. Then 
\begin{align}  \label{100}
w_{n+p} &= \left(\frac{b}{a} \right)^{\zeta (n+1)\zeta (p)} u_n w_{p+1} +c
\left( \frac{b}{a} \right)^{\zeta (n)\zeta (p+1)} u_{n-1} w_p.
\end{align}
\end{theorem}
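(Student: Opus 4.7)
The plan is to deduce the identity directly from the matrix relations (\ref{7}) and (\ref{8}) already established, by comparing the $(1,1)$-entries on the two sides of the factorisation $TU^{n+p-2}=TU^{p-1}\cdot U^{n-1}$.

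First I would apply (\ref{8}) with index $n+p-2$ to read off the direct form of $(TU^{n+p-2})_{11}$, which is a scalar multiple of $w_{n+p}$. Next I would expand the product $(TU^{p-1}\cdot U^{n-1})_{11}$ by invoking (\ref{8}) for $TU^{p-1}$ and (\ref{7}) for $U^{n-1}$, using $\zeta(n-1)=\zeta(n+1)$ to rewrite the parity exponents. This expansion yields a linear combination of $u_{n}w_{p+1}$ and $u_{n-1}w_{p}$ with explicit prefactors involving powers of $a$, $b$, and $(ab)$.

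Equating the two computations reduces the theorem to a monomial identity between these prefactors. To close the argument I would use the floor-sum identity
\[
\left\lfloor\frac{n+p-1}{2}\right\rfloor-\left\lfloor\frac{p}{2}\right\rfloor-\left\lfloor\frac{n-1}{2}\right\rfloor=\zeta(p)\zeta(n+1),
\]
together with the parity relations $\zeta(n+p-1)=1-\zeta(n+p)$ and $\zeta(n+p)=\zeta(n)+\zeta(p)-2\zeta(n)\zeta(p)$. After substituting and collecting, the coefficient of $u_{n}w_{p+1}$ should collapse to $(b/a)^{\zeta(n+1)\zeta(p)}$ and the coefficient of $cu_{n-1}w_{p}$ to $(b/a)^{\zeta(n)\zeta(p+1)}$, giving exactly the claimed identity in a single sweep without case analysis on the parities of $n$ or $p$.

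The main obstacle will be this final piece of exponent bookkeeping. Because the exponents $\zeta(n+1)\zeta(p)$ and $\zeta(n)\zeta(p+1)$ each depend on the joint parity of $n$ and $p$, the $\zeta$-arithmetic must be tracked carefully rather than split into four parity subcases; the verification is routine but easy to mis-write. Everything else reduces to the matrix identities already developed in Section~2.
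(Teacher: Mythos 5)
Your proposal is correct, and I verified the key arithmetic: with $U^{n-1}$ from (\ref{7}) and $TU^{p-1}$, $TU^{n+p-2}$ from (\ref{8}), comparing $(1,1)$-entries of $TU^{n+p-2}=TU^{p-1}\cdot U^{n-1}$ gives
\begin{equation*}
(ab)^{\left\lfloor \frac{n+p-1}{2}\right\rfloor}b^{\zeta(n+p-1)}w_{n+p}
=(ab)^{\left\lfloor \frac{p}{2}\right\rfloor+\left\lfloor \frac{n-1}{2}\right\rfloor}
\left(b^{\zeta(p)+\zeta(n+1)}u_nw_{p+1}+cb\,a^{\zeta(n+1)-\zeta(p+1)}u_{n-1}w_p\right),
\end{equation*}
and your floor-sum identity together with $\zeta(n+1)=1-\zeta(n)$, $\zeta(p+1)=1-\zeta(p)$ does collapse the prefactors to $(b/a)^{\zeta(n+1)\zeta(p)}$ and $(b/a)^{\zeta(n)\zeta(p+1)}$ respectively, with no case split needed. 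This is a genuinely different route from the paper's: the paper works in Section 3 with the matrix $A$ and the vector relations (\ref{31})--(\ref{32}), which are only stated for even exponents, so its proof must treat the four parity combinations of $n$ and $p$ separately (it writes out even $n$, even $p$ in full and sketches the rest). You instead reuse the Section 2 identities (\ref{7}) and (\ref{8}), which are valid for all nonnegative exponents, so the entire theorem follows from one uniform entry comparison plus $\zeta$-bookkeeping. What your approach buys is the elimination of the case analysis and of the auxiliary identities (\ref{36})--(\ref{37}); what the paper's approach buys is that the intermediate vector identities (\ref{31})--(\ref{32}) are reused later in the proofs of Theorems \ref{t5} and \ref{t6}. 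One small point to make explicit when writing this up: the factorisation requires $n-1\geq 0$ and $p-1\geq 0$, which holds since $n,p$ are positive, and the boundary instances $U^{0}=I$ and $TU^{0}=T$ are consistent with (\ref{7}) and (\ref{8}) via $cu_{-1}=1$ from (\ref{6}).
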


\begin{proof}
Let $n$ and $p$ be even. By (\ref{31}), (\ref{32}), 
\begin{align}
(ab)^{\frac{n+p}{2}}\left( 
\begin{array}{c}
w_{n+p+1} \\ 
a^{-1}w_{n+p}%
\end{array}%
\right) & =A^{n+p}\left( 
\begin{array}{c}
w_{1} \\ 
a^{-1}w_{0}%
\end{array}%
\right) =(ab)^{\frac{p}{2}}A^{n}\left( 
\begin{array}{c}
w_{p+1} \\ 
a^{-1}w_{p}%
\end{array}%
\right)  \label{34} \\
& =(ab)^{\frac{n+p}{2}}\left( 
\begin{array}{cc}
u_{n+1} & cbu_{n} \\ 
a^{-1}u_{n} & cu_{n-1}%
\end{array}%
\right) \left( 
\begin{array}{c}
w_{p+1} \\ 
a^{-1}w_{p}%
\end{array}%
\right) .  \notag
\end{align}%
By comparing both entries of the matrices on both sides of (\ref{34}), we
get (\ref{100}). Similarly, we obtain the following equation by (\ref{31})
and (\ref{32}), 
\begin{equation}
\left( 
\begin{array}{c}
cbw_{n+p+2} \\ 
cw_{n+p+1}%
\end{array}%
\right) =\left( 
\begin{array}{cc}
u_{n+1} & cbu_{n} \\ 
a^{-1}u_{n} & cu_{n-1}%
\end{array}%
\right) \left( 
\begin{array}{c}
cbw_{p+2} \\ 
cw_{p+1}%
\end{array}%
\right) .  \label{35}
\end{equation}%
By comparing entries of the matrices in (\ref{35}), we get the desired
result.
\end{proof}

\noindent By Theorem \ref{t4}, we have the following matrix identities for
even $n$ and even $p$: 
\begin{align}
\left( 
\begin{array}{c}
w_{n+p+1} \\ 
a^{-1}w_{p}%
\end{array}%
\right) & =\left( 
\begin{array}{cc}
u_{n+1} & bcu_{n} \\ 
0 & 1%
\end{array}%
\right) \left( 
\begin{array}{c}
w_{p+1} \\ 
a^{-1}w_{p}%
\end{array}%
\right) ,  \label{36} \\
\left( 
\begin{array}{c}
cbw_{n+p+2} \\ 
cw_{p+1}%
\end{array}%
\right) & =\left( 
\begin{array}{cc}
u_{n+1} & bcu_{n} \\ 
0 & 1%
\end{array}%
\right) \left( 
\begin{array}{c}
cbw_{p+2} \\ 
cw_{p+1}%
\end{array}%
\right) .  \label{37}
\end{align}

\noindent The following theorem is a generalization of Catalan's identity,
Cassini's identity and d'Ocagne's identity.

\begin{theorem}
\label{t5} Let $n$, $p$ and $q$ be any positive integers, then we have the
following identity: 
\begin{eqnarray*}
&&\left( \frac{b}{a}\right)^{\zeta(n)\zeta(p)\zeta(q)} w_{n+p} w_{n+q} -
\left(\frac{b}{a} \right)^{\zeta(n+1)\zeta(p)\zeta(q)} w_n w_{n+p+q} \\
&=&\left( \frac{b}{a}\right)^{\zeta(n)\zeta(p+1)\zeta(q+1)}(-c)^n u_p u_q
\left( w_1^2 - b w_0 w_1 -\frac{b}{a} c w_0^2 \right).
\end{eqnarray*}
\end{theorem}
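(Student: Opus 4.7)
The plan is to reduce the claimed identity to the Cassini identity (\ref{c}) via three applications of Theorem \ref{t4}. Setting $\alpha := b/a$, I would first apply Theorem \ref{t4} with its dummy indices interchanged to expand
\begin{align*}
w_{n+p} &= \alpha^{\zeta(p+1)\zeta(n)} u_p w_{n+1} + c\alpha^{\zeta(p)\zeta(n+1)} u_{p-1} w_n, \\
w_{n+q} &= \alpha^{\zeta(q+1)\zeta(n)} u_q w_{n+1} + c\alpha^{\zeta(q)\zeta(n+1)} u_{q-1} w_n, \\
w_{n+p+q} &= \alpha^{\zeta(p+q+1)\zeta(n)} u_{p+q} w_{n+1} + c\alpha^{\zeta(p+q)\zeta(n+1)} u_{p+q-1} w_n.
\end{align*}

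Substituting these three expansions into $\alpha^{\zeta(n)\zeta(p)\zeta(q)} w_{n+p} w_{n+q} - \alpha^{\zeta(n+1)\zeta(p)\zeta(q)} w_n w_{n+p+q}$ and multiplying out, one obtains an expression of the form $A w_{n+1}^2 + B w_n w_{n+1} + C w_n^2$, whose coefficients are sums of parity-weighted products of bi-periodic Fibonacci numbers. I would then invoke items $2$ and $3$ of Lemma \ref{l1} to rewrite $u_{p+q}$ and $u_{p+q-1}$ in terms of $u_p, u_{p-1}, u_q, u_{q-1}$. After this simplification the aim is to show that each of $A, B, C$ factors as $u_p u_q$ times the single parity constant $\alpha^{\zeta(n)\zeta(p+1)\zeta(q+1)}$ (the one appearing on the right-hand side of the theorem) times a coefficient depending only on $n$; concretely, the residual quadratic form in $w_n, w_{n+1}$ should be $\alpha^{\zeta(n)} w_{n+1}^2 - b\,w_n w_{n+1} - c\alpha^{\zeta(n+1)} w_n^2$.

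The last step is to identify this quadratic form with $(-c)^n(w_1^2 - bw_0 w_1 - \alpha c\, w_0^2)$. This follows by shifting $n \mapsto n+1$ in Cassini's identity (\ref{c}) and eliminating $w_{n+2}$ via the defining recurrence $w_{n+2} = a^{\zeta(n+1)} b^{\zeta(n)} w_{n+1} + c w_n$; the middle coefficient collapses to $b$ because $\zeta(n) + \zeta(n+1) = 1$. The main obstacle is the parity bookkeeping: the exponents of $\alpha$ involve nested products of $\zeta$-values, and one has to apply identities such as $\zeta(p+1)\zeta(q+1) = 1 - \zeta(p) - \zeta(q) + \zeta(p)\zeta(q)$ repeatedly in order to match the target exponent structure. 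A useful sanity check, verifiable by hand in both parities of $n$, is that specializing to $p = q = 1$ reduces (via $u_2 = a$ and the recurrence for $w_{n+2}$) directly to the reformulated Cassini identity above.
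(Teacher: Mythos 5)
Your proposal is correct, but it takes a genuinely different route from the paper's proof. The paper argues by a matrix telescoping trick: it writes $c\left( w_{n+p}w_{n+q}-\frac{b}{a}w_{n}w_{n+p+q}\right)$ as a row-vector times column-vector product, pushes each factor back to the initial data through a power of $A^{T}$ and of the companion matrix $\left( \begin{smallmatrix} 0 & -1 \\ -abc & ab \end{smallmatrix}\right)$, and collapses the two matrix powers using the identity $\left( \begin{smallmatrix} ab & 1 \\ abc & 0 \end{smallmatrix}\right) \left( \begin{smallmatrix} 0 & -1 \\ -abc & ab \end{smallmatrix}\right) =-abcI$, so that everything reduces to an explicit product involving only $w_{0},w_{1},w_{2}$ and $u_{p},u_{q}$. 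You instead expand $w_{n+p}$, $w_{n+q}$, $w_{n+p+q}$ by Theorem \ref{t4} with the roles of the two indices interchanged, collect a quadratic form in $w_{n},w_{n+1}$, eliminate $u_{p+q}$ and $u_{p+q-1}$ via the addition formula (item \emph{2} of Lemma \ref{l1}, together with one application of the recurrence to convert $u_{q+1}$ into $au_{q}+cu_{q-1}$ or $bu_{q}+cu_{q-1}$; item \emph{3} is not actually needed), and finish with the shifted Cassini identity (\ref{c}). I have checked the parity bookkeeping you defer: in each parity class of $(n,p,q)$ the three coefficients do collapse to $u_{p}u_{q}\left( \frac{b}{a}\right) ^{\zeta (n)\zeta (p+1)\zeta (q+1)}$ times $\left( \frac{b}{a}\right) ^{\zeta (n)}w_{n+1}^{2}-bw_{n}w_{n+1}-c\left( \frac{b}{a}\right) ^{\zeta (n+1)}w_{n}^{2}$, and the middle coefficient is $b$ in both parities of $n$ exactly as you predict, so the reduction to (\ref{c}) goes through. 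What your route buys is that it reuses results already proved in the paper and makes transparent why the constant $w_{1}^{2}-bw_{0}w_{1}-\frac{b}{a}cw_{0}^{2}$ appears (it is the Cassini invariant); what it costs is the same unavoidable case analysis over parities, now distributed over three separate expansions rather than concentrated in a single matrix identity.
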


\begin{proof}
For the case of even $n$, odd $p$ and odd $q$, we note that 
\begin{equation}
c\left( w_{n+p}w_{n+q}-\frac{b}{a}w_{n}w_{n+p+q}\right) =\left( 
\begin{array}{cc}
w_{n+q} & a^{-1}w_{n}%
\end{array}%
\right) \left( 
\begin{array}{c}
cw_{n+p} \\ 
-bcw_{n+p+q}%
\end{array}%
\right) .  \label{37.5}
\end{equation}%
By (\ref{36}) and (\ref{37}) , we get the following two equations, 
\begin{align}
(ab)^{\frac{n}{2}}\left( 
\begin{array}{cc}
w_{n+q} & a^{-1}w_{n}%
\end{array}%
\right) & =(ab)^{\frac{n}{2}}\left( 
\begin{array}{cc}
w_{n+1} & a^{-1}w_{n}%
\end{array}%
\right) \left( 
\begin{array}{cc}
u_{q} & 0 \\ 
bcu_{q-1} & 1%
\end{array}%
\right)  \label{38} \\
& =\left( 
\begin{array}{cc}
w_{1} & a^{-1}w_{0}%
\end{array}%
\right) (A^{T})^{n}\left( 
\begin{array}{cc}
u_{q} & 0 \\ 
bcu_{q-1} & 1%
\end{array}%
\right) .  \notag
\end{align}%
\begin{equation}
(ab)^{\frac{n+p-1}{2}}\left( 
\begin{array}{c}
cw_{n+p} \\ 
-bcw_{n+p+q}%
\end{array}%
\right) =\left( 
\begin{array}{cc}
1 & 0 \\ 
-bcu_{q-1} & u_{q}%
\end{array}%
\right) \left( 
\begin{array}{cc}
0 & -1 \\ 
-abc & ab%
\end{array}%
\right) ^{n+p-1}\left( 
\begin{array}{c}
cw_{1} \\ 
-bcw_{2}%
\end{array}%
\right) .  \label{39}
\end{equation}%
We note that 
\begin{equation}
\left( 
\begin{array}{cc}
u_{q} & 0 \\ 
bcu_{q-1} & 1%
\end{array}%
\right) \left( 
\begin{array}{cc}
1 & 0 \\ 
-bcu_{q-1} & u_{q}%
\end{array}%
\right) =u_{q}I,\left( 
\begin{array}{cc}
ab & 1 \\ 
abc & 0%
\end{array}%
\right) \left( 
\begin{array}{cc}
0 & -1 \\ 
-abc & ab%
\end{array}%
\right) =-abcI.  \label{40}
\end{equation}%
We take the product of equations (\ref{38}) and (\ref{39}), and by using (%
\ref{37.5}), (\ref{40}), we get 
\begin{align}
& (ab)^{n+\frac{p-1}{2}}c\left( w_{n+p}w_{n+q}-\frac{b}{a}%
w_{n}w_{n+p+q}\right)  \notag \\
& =u_{q}\left( 
\begin{array}{cc}
w_{1} & a^{-1}w_{0}%
\end{array}%
\right) \left( 
\begin{array}{cc}
ab & 1 \\ 
abc & 0%
\end{array}%
\right) ^{n}\left( 
\begin{array}{cc}
0 & -1 \\ 
-abc & ab%
\end{array}%
\right) ^{n+p-1}\left( 
\begin{array}{c}
cw_{1} \\ 
-bcw_{2}%
\end{array}%
\right)  \notag \\
& =(-abc)^{n}u_{q}\left( 
\begin{array}{cc}
w_{1} & a^{-1}w_{0}%
\end{array}%
\right) \left( 
\begin{array}{cc}
0 & -1 \\ 
-abc & ab%
\end{array}%
\right) ^{p-1}\left( 
\begin{array}{c}
cw_{1} \\ 
-bcw_{2}%
\end{array}%
\right)  \notag \\
& =(-abc)^{n}u_{q}\left( 
\begin{array}{cc}
w_{1} & a^{-1}w_{0}%
\end{array}%
\right) (ab)^{\frac{p-1}{2}}\left( 
\begin{array}{cc}
cu_{p-2} & -a^{-1}u_{p-1} \\ 
-bcu_{p-1} & u_{p}%
\end{array}%
\right) \left( 
\begin{array}{c}
cw_{1} \\ 
-bcw_{2}%
\end{array}%
\right) .  \label{41}
\end{align}%
We compute the following matrix product: 
\begin{align}
& \left( 
\begin{array}{cc}
w_{1} & a^{-1}w_{0}%
\end{array}%
\right) \left( 
\begin{array}{cc}
cu_{p-2} & -a^{-1}u_{p-1} \\ 
-bcu_{p-1} & u_{p}%
\end{array}%
\right) \left( 
\begin{array}{c}
cw_{1} \\ 
-bcw_{2}%
\end{array}%
\right)  \notag \\
& =c^{2}w_{1}^{2}u_{p-2}-\frac{b}{a}c^{2}w_{0}w_{1}u_{p-1}+\frac{b}{a}%
cw_{1}w_{2}u_{p-1}-\frac{b}{a}cw_{0}w_{2}u_{p}  \notag \\
& =c^{2}w_{1}^{2}u_{p-2}-\frac{b}{a}cw_{1}u_{p-1}\left( cw_{0}-w_{2}\right) -%
\frac{b}{a}cw_{0}u_{p}(aw_{1}+cw_{0})  \notag \\
& =c^{2}w_{1}^{2}u_{p-2}+bcw_{1}^{2}u_{p-1}-bcw_{0}w_{1}u_{p}-\frac{b}{a}%
c^{2}w_{0}^{2}u_{p}  \notag \\
& =cw_{1}^{2}(u_{p-2}+bu_{p-1})-bcw_{0}w_{1}u_{p}-\frac{b}{a}%
c^{2}w_{0}^{2}u_{p}  \notag \\
& =cw_{1}^{2}u_{p}-bcw_{0}w_{1}u_{p}-\frac{b}{a}c^{2}w_{0}^{2}u_{p}=cu_{p}%
\left( w_{1}^{2}-bw_{0}w_{1}-\frac{b}{a}cw_{0}^{2}\right) .  \notag
\end{align}%
We replace it in (\ref{41}) to get the result as desired for the case of
even $n$, odd $p$ and odd $q$.

For the case of odd $n^{\prime }$, we take $n^{\prime }=n+1$ and then do a
similar computation as above for 
\begin{equation*}
(ab)^{n+\frac{p+1}{2}} \left(%
\begin{array}{cc}
bc w_{(n+1)+p} & c w_{(n+1)}%
\end{array}
\right)\left(%
\begin{array}{c}
a^{-1} w_{(n+1)+q} \\ 
-w_{(n+1)+p+q}%
\end{array}
\right)
\end{equation*}%
where $n$ is even, $p$, $q$ are odd.

For the case of even $n$, even $p$ and odd $q$, we do a similar computation
as above for 
\begin{equation*}
(ab)^{n+\frac{p}{2}} \left(%
\begin{array}{cc}
w_{n+q} & a^{-1} w_n%
\end{array}
\right) \left(%
\begin{array}{c}
a^{-1} w_{n+p} \\ 
- w_{n+p+q}%
\end{array}
\right).
\end{equation*}

For the case of odd $n$, even $p$ and odd $q$, we do a similar computation
as above for 
\begin{equation*}
(ab)^{n-1+\frac{p}{2} }\left(%
\begin{array}{cc}
bc w_{n+q} & c w_n%
\end{array}
\right) \left(%
\begin{array}{c}
c w_{n+p} \\ 
-bc w_{n+p+q}%
\end{array}
\right).
\end{equation*}

The other cases for even $q$ can be proven similarly.
\end{proof}

We state another two matrix identities for even $n$: 
\begin{equation}
\begin{pmatrix}
w_{1} & bcw_{0}%
\end{pmatrix}%
A^{n}=(ab)^{\frac{n}{2}}%
\begin{pmatrix}
w_{n+1} & bcw_{n}%
\end{pmatrix}%
,%
\begin{pmatrix}
a^{-1}w_{2} & cw_{1}%
\end{pmatrix}%
A^{n}=(ab)^{\frac{n}{2}}%
\begin{pmatrix}
a^{-1}w_{n+2} & cw_{n+1}%
\end{pmatrix}%
.  \label{42}
\end{equation}%
The following result is a generalization of $\left( 2\right) $ and $\left(
4\right) $ of Lemma \ref{l1}.

\begin{theorem}
\label{t6}For positive integers $n$ and $m$, we have 
\begin{equation*}
\left(\frac{b}{a} \right)^{\zeta(mn+n)} w_{n+1} w_m + \left( \frac{b}{a}%
\right)^{\zeta(mn+m)} c w_n w_{m-1} = w_1 w_{m+n} + \left(\frac{b}{a}%
\right)^{\zeta(m+n)} c w_0 w_{m+n-1}.
\end{equation*}
\end{theorem}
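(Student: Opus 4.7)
The plan is to reduce both sides of the identity to a common linear combination of $w_{0}$ and $w_{1}$ via two successive applications of Theorem~\ref{t4}. First I would use the expansion formula~(\ref{a}) to write $w_{m}=u_{m}w_{1}+c(b/a)^{\zeta(m)}u_{m-1}w_{0}$ and $w_{m-1}=u_{m-1}w_{1}+c(b/a)^{\zeta(m-1)}u_{m-2}w_{0}$. Substituting these into the LHS and collecting terms gives an expression of the form $w_{1}\,\alpha+cw_{0}\,\beta$, where
\[
\alpha=(b/a)^{\zeta(mn+n)}u_{m}w_{n+1}+c(b/a)^{\zeta(mn+m)}u_{m-1}w_{n},
\]
and $\beta$ is the analogous combination of $u_{m-1}w_{n+1}$ and $cu_{m-2}w_{n}$, carrying the coefficients $(b/a)^{\zeta(mn+n)+\zeta(m)}$ and $(b/a)^{\zeta(mn+m)+\zeta(m-1)}$ respectively.

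Second, I would exploit the multiplicative parity rule $\zeta(xy)=\zeta(x)\zeta(y)$ to rewrite $\zeta(mn+n)=\zeta(n(m+1))=\zeta(n)\zeta(m+1)$ and $\zeta(mn+m)=\zeta(m)\zeta(n+1)$. With this, Theorem~\ref{t4} applied with $(n,p)\mapsto(m,n)$ reads precisely $\alpha=w_{m+n}$, so the $w_{1}$-coefficient already matches the RHS. Applying Theorem~\ref{t4} a second time with $(n,p)\mapsto(m-1,n)$ yields
\[
w_{m+n-1}=(b/a)^{\zeta(m)\zeta(n)}u_{m-1}w_{n+1}+c(b/a)^{\zeta(m-1)\zeta(n+1)}u_{m-2}w_{n},
\]
so the identification $\beta=(b/a)^{\zeta(m+n)}w_{m+n-1}$ reduces to checking the two exponent equalities
\[
\zeta(mn+n)+\zeta(m)=\zeta(m+n)+\zeta(m)\zeta(n),\qquad \zeta(mn+m)+\zeta(m-1)=\zeta(m+n)+\zeta(m-1)\zeta(n+1).
\]

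The main obstacle is this last parity bookkeeping: the $\zeta$-exponents scattered across the statement, across~(\ref{a}), and across Theorem~\ref{t4} are arranged just so, and I must confirm that the extra $(b/a)^{\zeta(m)}$ factor pulled out of $w_{m}$ combines correctly with the exponent generated by Theorem~\ref{t4}. I would dispatch the two exponent identities by running through the four parity cases for $(m,n)$; each case gives matching nonnegative integer exponents on both sides (for instance, when $m$ is even and $n$ odd both sides of the first identity equal $1$, and both sides of the second also equal $1$). Once this verification is complete, reassembling $w_{1}\alpha+cw_{0}\beta$ produces $w_{1}w_{m+n}+(b/a)^{\zeta(m+n)}cw_{0}w_{m+n-1}$, which is the right-hand side of Theorem~\ref{t6}.
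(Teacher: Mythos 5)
Your proof is correct, but it takes a genuinely different route from the paper's. The paper proves Theorem~\ref{t6} directly by the matrix method: it writes the left-hand side as a row-vector-times-column-vector product, uses the identities (\ref{36}) and (\ref{42}) to pull the two vectors back to indices $0,1$ via powers of $A$, recombines $A^{n}A^{m-1}=A^{m+n-1}$, and then repeats this with a different vector pairing in each of the four parity cases of $(m,n)$. You instead expand $w_{m}$ and $w_{m-1}$ in terms of $w_{1}$ and $cw_{0}$ via (\ref{a}) and recognize the two resulting coefficients as instances of Theorem~\ref{t4}, so Theorem~\ref{t6} becomes a formal (bilinearity) consequence of Theorem~\ref{t4}; the entire burden is shifted onto two $\zeta$-exponent identities, which you correctly single out as the crux and which do close: writing $\zeta(m+n)=\zeta(m)+\zeta(n)-2\zeta(m)\zeta(n)$ and $\zeta(m+1)=1-\zeta(m)$, both sides of your first identity equal $\zeta(m)+\zeta(n)-\zeta(m)\zeta(n)$ and both sides of the second equal $1-\zeta(m)\zeta(n)$, uniformly in the parities. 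This buys a shorter, case-free argument that reuses already-proved results, at the cost of the parity bookkeeping that the paper's case split avoids. One small caveat: when $m=1$, your expansion of $w_{m-1}=w_{0}$ via (\ref{a}) and your second application of Theorem~\ref{t4} with $(n,p)\mapsto(m-1,n)=(0,n)$ fall outside the stated hypotheses (positive indices) and involve $u_{-1}$; you should either note that these formulas extend to index $0$ using $u_{0}=0$ and $u_{-1}=1/c$ from (\ref{6}), or observe that for $m=1$ the claimed identity is a tautology since $\zeta(mn+n)=\zeta(2n)=0$ and $\zeta(mn+m)=\zeta(n+1)=\zeta(m+n)$.
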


\begin{proof}
For even $n$ and odd $m$, by (\ref{36}) and (\ref{42}), 
\begin{align*}
& (ab)^{\frac{n+m-1}{2}}\left( w_{n+1}w_{m}+\frac{b}{a}cw_{n}w_{m-1}\right)
=(ab)^{\frac{n+m-1}{2}}%
\begin{pmatrix}
w_{n+1} & bcw_{n}%
\end{pmatrix}%
\begin{pmatrix}
w_{m} \\ 
a^{-1}w_{m-1}%
\end{pmatrix}
\\
& =%
\begin{pmatrix}
w_{1} & bcw_{0}%
\end{pmatrix}%
A^{n}A^{m-1}%
\begin{pmatrix}
w_{1} \\ 
a^{-1}w_{0}%
\end{pmatrix}%
=%
\begin{pmatrix}
w_{1} & bcw_{0}%
\end{pmatrix}%
A^{m+n-1}%
\begin{pmatrix}
w_{1} \\ 
a^{-1}w_{0}%
\end{pmatrix}
\\
& =(ab)^{\frac{m+n-1}{2}}%
\begin{pmatrix}
w_{1} & bcw_{0}%
\end{pmatrix}%
\begin{pmatrix}
w_{m+n} \\ 
a^{-1}w_{m+n-1}%
\end{pmatrix}%
.
\end{align*}%
And hence the result follows. For the case of odd $n$ and even $m$, we do a
similar computation on 
\begin{equation*}
(ab)^{\frac{n+m-1}{2}}%
\begin{pmatrix}
a^{-1}w_{m} & cw_{m-1}%
\end{pmatrix}%
\begin{pmatrix}
bcw_{n+1} \\ 
cw_{n}%
\end{pmatrix}%
.
\end{equation*}%
For even $n$ and even $m$, we do a similar computation on 
\begin{equation*}
(ab)^{\frac{m+n-2}{2}}%
\begin{pmatrix}
a^{-1}w_{m} & cw_{m-1}%
\end{pmatrix}%
\begin{pmatrix}
w_{n+1} \\ 
a^{-1}w_{n}%
\end{pmatrix}%
.
\end{equation*}%
For odd $n$ and odd $m$, we do a similar computation on 
\begin{equation*}
(ab)^{\frac{m+n-2}{2}}%
\begin{pmatrix}
w_{m} & bcw_{m-1}%
\end{pmatrix}%
\begin{pmatrix}
bcw_{n+1} \\ 
cw_{n}%
\end{pmatrix}%
.
\end{equation*}
\end{proof}

By substituting $m=n+1$ in Theorem \ref{t6}, we get the following corollary,
which is a generalization of the classical result $F_{n+1}^2 +F_n^2
=F_{2n+1} $ for Fibonacci numbers.

\begin{corollary}
\label{c1}For positive integer $n$, we have 
\begin{equation*}
\left(\frac{b}{a} \right)^{\zeta(n)} w_{n+1}^2+ \left( \frac{b}{a}%
\right)^{\zeta(n+1)} c w_n^2 = w_1 w_{2n+1} + \left(\frac{b}{a}\right) c w_0
w_{2n}.
\end{equation*}
\end{corollary}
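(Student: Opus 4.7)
The plan is to obtain Corollary \ref{c1} as a direct specialization of Theorem \ref{t6}, the work being essentially a parity bookkeeping exercise rather than a fresh matrix argument.

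First I would set $m = n+1$ in the identity
\begin{equation*}
\left(\tfrac{b}{a}\right)^{\zeta(mn+n)} w_{n+1} w_m + \left(\tfrac{b}{a}\right)^{\zeta(mn+m)} c\, w_n w_{m-1} = w_1 w_{m+n} + \left(\tfrac{b}{a}\right)^{\zeta(m+n)} c\, w_0 w_{m+n-1}
\end{equation*}
of Theorem \ref{t6}, so that the left-hand side collapses to $(b/a)^{\zeta(n(n+2))} w_{n+1}^2 + (b/a)^{\zeta((n+1)^2)} c\, w_n^2$ and the right-hand side becomes $w_1 w_{2n+1} + (b/a)^{\zeta(2n+1)} c\, w_0 w_{2n}$.

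The main step is then to verify that the parity exponents simplify to the ones claimed. Since $n(n+2)$ has the same parity as $n$, we have $\zeta(n(n+2)) = \zeta(n)$; since $(n+1)^2$ has the same parity as $n+1$, we have $\zeta((n+1)^2) = \zeta(n+1)$; and $2n+1$ is always odd, so $\zeta(2n+1) = 1$. Substituting these three identifications into the specialized identity yields
\begin{equation*}
\left(\tfrac{b}{a}\right)^{\zeta(n)} w_{n+1}^2 + \left(\tfrac{b}{a}\right)^{\zeta(n+1)} c\, w_n^2 = w_1 w_{2n+1} + \tfrac{b}{a}\, c\, w_0 w_{2n},
\end{equation*}
which is exactly the statement of the corollary.

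There is no real obstacle here; the only thing to be careful about is the parity arithmetic, particularly confirming that $n(n+2)$ and $n$ share parity in both the even and odd cases, which is immediate. Thus no new matrix identity or case split beyond what is already packed into Theorem \ref{t6} is needed.
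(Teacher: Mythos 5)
Your proposal is correct and is exactly the paper's route: the corollary is stated there as the $m=n+1$ specialization of Theorem \ref{t6}, and your parity checks $\zeta(n(n+2))=\zeta(n)$, $\zeta((n+1)^2)=\zeta(n+1)$, $\zeta(2n+1)=1$ are all that is needed.
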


By Corollary \ref{c1}, we prove the following result, which is a
generalization of the classical result $F_{n+1}^2 - F_{n-1}^2 =F_{2n}$ for
Fibonacci numbers.

\begin{theorem}
\label{t7}For positive integer $n$, we have 
\begin{equation*}
w_{n+1}^2 -c^2 w_{n-1}^2 = a^{\zeta(n)} b^{\zeta(n+1)} (w_1 w_{2n} + c w_0
w_{2n-1} ).
\end{equation*}
\end{theorem}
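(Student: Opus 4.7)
The approach is to derive Theorem \ref{t7} from Corollary \ref{c1} by applying it at two consecutive indices $n$ and $n-1$ and eliminating the $w_n^2$ terms.

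First, I would write Corollary \ref{c1} at index $n$, and then again with $n$ replaced by $n-1$. Since $\zeta(n-1)=\zeta(n+1)$, the second identity reads
\[
\left(\frac{b}{a}\right)^{\zeta(n+1)} w_n^2 + \left(\frac{b}{a}\right)^{\zeta(n)} c\, w_{n-1}^2 \;=\; w_1 w_{2n-1} + \left(\frac{b}{a}\right) c\, w_0 w_{2n-2}.
\]
The coefficient of $w_n^2$ in the first identity is exactly $c$ times the coefficient of $w_n^2$ in the second, so taking the first minus $c$ times the second eliminates the $w_n^2$ contribution. The left-hand side collapses to $\left(\frac{b}{a}\right)^{\zeta(n)}\bigl(w_{n+1}^2-c^2 w_{n-1}^2\bigr)$, and the right-hand side becomes $w_1(w_{2n+1}-c\, w_{2n-1}) + \left(\frac{b}{a}\right) c\, w_0 (w_{2n}-c\, w_{2n-2})$.

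Next, the defining recurrence $w_k=\chi(k)w_{k-1}+c\,w_{k-2}$ gives $w_{2n+1}-c\,w_{2n-1}=b\,w_{2n}$ (since $\chi(2n+1)=b$) and $w_{2n}-c\,w_{2n-2}=a\,w_{2n-1}$ (since $\chi(2n)=a$). Substituting, the right-hand side simplifies to $b\bigl(w_1 w_{2n}+c\, w_0 w_{2n-1}\bigr)$, because the $\left(\frac{b}{a}\right)\cdot a$ factor collapses to $b$. Multiplying through by $\left(\frac{a}{b}\right)^{\zeta(n)}$ and using $b\cdot\left(\frac{a}{b}\right)^{\zeta(n)} = a^{\zeta(n)}b^{1-\zeta(n)} = a^{\zeta(n)}b^{\zeta(n+1)}$ then yields the stated identity.

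The only real obstacle is keeping the parity-exponent bookkeeping clean. Two observations make the argument go through: the identity $\zeta(n-1)=\zeta(n+1)$, which aligns the two $w_n^2$ coefficients so the cancellation is exact, and the evaluations $\chi(2n+1)=b$, $\chi(2n)=a$, which let one rewrite each difference on the right in terms of a single $w$-value without introducing further parity case-splits.
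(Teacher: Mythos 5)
Your proposal is correct and follows essentially the same route as the paper: apply Corollary \ref{c1} at indices $n$ and $n-1$, subtract $c$ times the second from the first to cancel the $w_n^2$ terms, and then use the recurrence to reduce $w_{2n+1}-cw_{2n-1}$ and $w_{2n}-cw_{2n-2}$ to $bw_{2n}$ and $aw_{2n-1}$. The only difference is cosmetic -- you carry the parity exponents uniformly via $\zeta(n-1)=\zeta(n+1)$, whereas the paper splits into the even-$n$ and odd-$n$ cases explicitly.
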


\begin{proof}
For even $n$, by Corollary \ref{c1}, 
\begin{align*}
& w_{n+1}^{2}-c^{2}w_{n-1}^{2}=\left( w_{n+1}+\frac{b}{a}cw_{n}^{2}\right)
-\left( \frac{b}{a}cw_{n}^{2}+c^{2}w_{n-1}^{2}\right) \\
& =\left( w_{1}w_{2n+1}+\frac{b}{a}cw_{0}w_{2n}\right) -c\left(
w_{1}w_{2n-1}+\frac{b}{a}cw_{0}w_{2n-2}\right) \\
& =w_{1}\left( w_{2n+1}-cw_{2n-1}\right) +\frac{b}{a}cw_{0}\left(
w_{2n}-cw_{2n-2}\right) =bw_{1}w_{2n}+bcw_{0}w_{2n-1}.
\end{align*}%
For odd $n$, by Corollary \ref{c1}, 
\begin{align*}
& \left( \frac{b}{a}\right) ^{2}\left( w_{n+1}^{2}-c^{2}w_{n-1}^{2}\right) =%
\frac{b}{a}\left( \frac{b}{a}w_{n+1}^{2}+cw_{n}^{2}\right) -\frac{b}{a}%
c\left( w_{n}^{2}+\frac{b}{a}cw_{n-1}^{2}\right) \\
& =\frac{b}{a}\left( w_{1}w_{2n+1}+\frac{b}{a}cw_{0}w_{2n}\right) -\frac{b}{a%
}c\left( w_{1}w_{2n-1}+\frac{b}{a}cw_{0}w_{2n-2}\right) \\
& =\frac{b}{a}\left( w_{1}(w_{2n+1}-cw_{2n-1})+\frac{b}{a}%
cw_{0}(w_{2n}-cw_{2n-2})\right) =\frac{b}{a}\left(
bw_{1}w_{2n}+bcw_{0}w_{2n-1}\right) .
\end{align*}%
So the proof is complete after simple algebraic manipulations of both sides
of the equation.
\end{proof}

\textbf{Competing interests}

The authors declare that they have no competing interests.

\textbf{Authors' contributions}

All authors contributed equally to the manuscript and typed, read and
approved the final manuscript.

\textbf{Funding}

Not applicable.

\bigskip

{\small \noindent\textbf{Elif Tan} }

{\small \noindent Department of Mathematics}

{\small \noindent Ankara University }

{\small \noindent Ankara, Turkey }

{\small \noindent E-mail: etan@ankara.edu.tr}\newline

{\small \noindent\textbf{Ho-Hon Leung} }

{\small \noindent Department of Mathematical Sciences}

{\small \noindent United Arab Emirates University }

{\small \noindent Al Ain, 15551, United Arab Emirates }

{\small \noindent E-mail: hohon.leung@uaeu.ac.ae}\newline

\end{document}